 \newtheorem{theorem}{Theorem}
\newtheorem{lemma}[theorem]{Lemma}
\newtheorem{cor}[theorem]{Corollary}
\newtheorem{defn}[theorem]{Definition}
\title{On rigidity of unit-bar frameworks}
\author{J\'ozsef Solymosi \qquad Ethan White\\
Department of Mathematics\\
The University of British Columbia \\
Vancouver, BC\\
 Canada V6T 1Z2}
\begin{document}

\maketitle

\begin{abstract}

We show the existence of infinitesimally rigid bipartite unit-bar frameworks in $\mathbb{R}^d$. We also construct unit-bar frameworks with girth up to 12 that are infinitesimally rigid in the plane. This answers problems proposed by Maehara. 

\end{abstract} 

\section{Introduction} 

The \textit{unit distance problem} was posed by Paul Erd\H{o}s in 1946: how many pairs of $n$ points in the plane can be unit distance apart? \cite{ed} Erd\H{o}s gave a construction that proved there are at least $n^{1+o(1)}$ such pairs, and he conjectured that this is the true order of magnitude. This is one of the central open problems in discrete geometry. Understanding point configurations with many unit distances is an important problem. 

A \textit{framework} in $\mathbb{R}^d$ is a graph with vertices that are distinct points in $\mathbb{R}^d$, and edges that are line segments between vertices. We refer to the vertices of a framework as \textit{joints} and edges as \textit{bars}. A framework is \textit{flexible} if there is a continuous motion of its joints, keeping bar lengths constant, while changing the distance between two non-adjacent joints. If a framework is not flexible, it is \textit{rigid}. For example, in the plane a square can be deformed into a family of rhombi, and so it is flexible. On the other hand, the shape of a triangle is uniquely determined by the lengths of its three sides, and so it is rigid.

An \textit{infinitesimal motion of} $\mathbb{R}^d$ is a vector field $f \colon \mathbb{R}^d \to \mathbb{R}^d$ such that for all pairs of points $x,y \in \mathbb{R}^d$:
\begin{equation}\label{infdef}
\left( f(x) - f(y) \right) \cdot (x-y) = 0 .
\end{equation}

Let $F$ be a framework in $\mathbb{R}^d$ with joints $X$. An \textit{infinitesimal motion of the framework} $F$ is a vector field $g \colon X \to \mathbb{R}^d$ that satisfies (\ref{infdef}) for all bars $xy$ in $F$. If every infinitesimal motion $g$ of the framework $F$ is of the form $f|_X$ for some infinitesimal motion $f$ of $\mathbb{R}^d$, then we say $F$ is \textit{infinitesimally rigid}, otherwise $F$ is \textit{infinitesimally flexible}. 

A framework possessing a continuous motion also admits a smooth motion, see \cite{ar}. The initial velocity of the joints in a framework undergoing a smooth continuous motion is an infinitesimal motion. Hence flexible frameworks are infinitesimally flexible and infinitesimally rigid frameworks are rigid. 

A \textit{unit-bar framework} has bars of only one length. Constructing rigid unit-bar frameworks can be done by attaching equilateral triangles, but determining rigid triangle-free unit-bar frameworks is harder. Maehara constructed a rigid bipartite unit-bar framework in \cite{m2} with 353 joints and 676 bars. His construction is rigid, but not infinitesimally rigid. In \cite{mc} Maehara and Chinen find an infinitesimally rigid triangle-free unit-bar framework with 22 joints and 41 bars. Their framework contains pentagons. In \cite{m1} and \cite{mc} the authors propose the following problems:
\begin{enumerate}[i.]
\item Find an infinitesimally rigid bipartite unit-bar framework in the plane
\item Find a general method to construct a triangle-free, infinitesimally rigid unit-bar framework in $\mathbb{R}^d$. 
\end{enumerate}
In this paper we solve these problems. In Section 2 we show a method for constructing infinitesimally rigid bipartite unit-bar frameworks in $\mathbb{R}^d$. In Section 3 we construct infinitesimally rigid bipartite unit-bar frameworks in the plane with girth up to 12. Our calculations in Section 3 rely on computers. For sake of completeness we provide the computer code in Appendix 2. The python files of our programs have been uploaded alongside this paper. 


\section{Infinitesimally rigid unit-bar frameworks in $\mathbb{R}^d$}

In our constructions we use variants of the knight's graph. The knight's graph has a vertex for each square on a chessboard and edges that represent legal moves the knight. 

\begin{defn}\label{knightdef} The $m \times n$ knight's framework in $\mathbb{R}^2$ has a joint at all integer coordinates $(x,y)$ where $0 \leq x \leq m-1$ and $0 \leq y \leq n-1$. Two joints $(x_1,y_1)$, $(x_2,y_2)$ have a bar between them if $|x_1-x_2| = 1$ and $|y_1-y_2| = 2$, or if $|x_1 - x_2| = 2$ and $|y_1-y_2| = 1$. We will denote the $m \times m$ knight's framework by $N_m$. 

\end{defn}

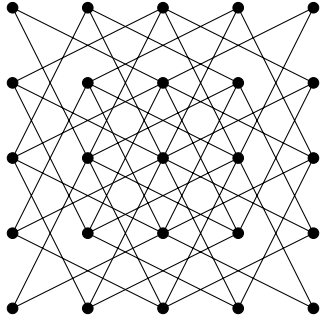
\begin{figure}[h!]
    \centering
       \begin{tikzpicture}[scale=1]
\draw
 (0,0) -- (2,1)
(0,0) -- (1,2)
(0,1) -- (2,2)
(0,1) -- (1,3)
(0,1) -- (2,0)
(0,2) -- (1,4)
(0,2) -- (2,3)
(0,2) -- (1,0)
(0,2) -- (2,1)
(0,3) -- (1,1)
(0,3) -- (2,2)
(0,3) -- (2,4)
(0,4) -- (1,2)
(0,4) -- (2,3);
\draw
(1,0) -- (3,1)
(1,0) -- (2,2)
(1,1) -- (3,2)
(1,1) -- (2,3)
(1,1) -- (3,0)
(1,2) -- (2,4)
(1,2) -- (3,3)
(1,2) -- (2,0)
(1,2) -- (3,1)
(1,3) -- (2,1)
(1,3) -- (3,2)
(1,3) -- (3,4)
(1,4) -- (2,2)
(1,4) -- (3,3)
(2,0) -- (4,1)
(2,0) -- (3,2)
(2,1) -- (4,2)
(2,1) -- (3,3)
(2,1) -- (4,0)
(2,2) -- (3,4)
(2,2) -- (4,3)
(2,2) -- (3,0)
(2,2) -- (4,1)
(2,3) -- (3,1)
(2,3) -- (4,2)
(2,3) -- (4,4)
(2,4) -- (3,2)
(2,4) -- (4,3)
(3,0) -- (4,2)
(3,1) -- (4,3)
(3,2) -- (4,4)
(3,2) -- (4,0)
(3,3) -- (4,1)
(3,4) -- (4,2);
\filldraw
(0,0) circle (2pt)
(0,1) circle (2pt)
(0,2) circle(2pt)
(0,3) circle(2pt)
(0,4) circle(2pt)
(1,0) circle(2pt)
(1,1) circle(2pt)
(1,2) circle(2pt)
(1,3) circle(2pt)
(1,4) circle(2pt)
(2,0) circle(2pt)
(2,1) circle(2pt)
(2,2) circle(2pt)
(2,3) circle(2pt)
(2,4) circle(2pt)
(3,0) circle(2pt)
(3,1) circle(2pt)
(3,2) circle(2pt)
(3,3) circle(2pt)
(3,4) circle(2pt)
(4,0) circle(2pt)
(4,1) circle(2pt)
(4,2) circle(2pt)
(4,3) circle(2pt)
(4,4) circle(2pt);

\end{tikzpicture}
        \caption{$5 \times 5$ knight framework}
        
        \end{figure}

The knight's framework is a unit-bar framework. Two joints $(x_1,y_1)$, $(x_2,y_2)$ are adjacent only if $x_1+y_1$ and $x_2+y_2$ have different parity, and so the framework is bipartite. An infinitesimally rigid framework in the plane on $v$ joints must have at least $2v-3$ bars \cite{gss}. The $m \times n$ knight's framework has $2(m-1)(n-2)+2(m-2)(n-1)$ bars. It is easy to check that the smallest $m\times n$ knight's framework with enough edges to be rigid is the $5 \times 5$ framework. 

The infinitesimal motions of $\mathbb{R}^d$ arise from the initial velocities of smooth rigid motions, i.e. rotations and translations. As a result, the space of infinitesimal motions of $\mathbb{R}^d$ has dimension $\binom{d+1}{2}$. A framework $F$ is infinitesimally rigid if and only the space of infinitesimal motions of $F$ has dimension $\binom{d+1}{2}$. 

\begin{theorem}\label{5x5} The $5 \times 5$ knight's framework is infinitesimally rigid. 
\end{theorem}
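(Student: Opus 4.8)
The plan is to verify directly that the space of infinitesimal motions of $N_5$ has dimension exactly $\binom{3}{2}=3$; since the restrictions to the $25$ joints of the infinitesimal motions of $\mathbb{R}^2$ already span a $3$-dimensional subspace, this amounts to showing that every infinitesimal motion $g$ of $N_5$ is trivial. I would begin by fixing the gauge. Subtracting a translation, we may assume $g(0,0)=0$; the two bars at the corner $(0,0)$ then force $g(1,2)\perp(1,2)$ and $g(2,1)\perp(2,1)$, and subtracting the infinitesimal rotation $x\mapsto x^{\perp}$ about the origin (which fixes $g(0,0)$ and adds an arbitrary multiple of $(1,2)^{\perp}$ to $g(1,2)$) we may further assume $g(1,2)=0$. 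All three degrees of gauge freedom are now spent, and it remains to prove that $g\equiv 0$.

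From here I would propagate the bar equations $(g(x)-g(y))\cdot(x-y)=0$ outward from the two joints where $g$ is already known. Each bar incident to $(0,0)$ or $(1,2)$ pins $g$ at a knight-neighbour to a line through the origin; bars among those neighbours and the next shell of joints then cut down the remaining freedom, and one checks that iterating this forces $g\equiv 0$. Equivalently, the $48\times 50$ rigidity matrix of $N_5$ has rank $47$; as $N_5$ has exactly one bar more than the rigidity minimum $2\cdot 25-3=47$, some bar is redundant and it suffices to exhibit $47$ independent rows. (Note that Laman's theorem would at best give generic rigidity of the $5\times 5$ knight's graph, not of this particular grid realization — special realizations of a generically rigid bipartite graph can be infinitesimally flexible — so a computation specific to the integer grid is genuinely needed.)

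I expect the only real difficulty to be organizational: this is a system of about fifty one-line equations, and the work lies in choosing a propagation order that leaves no coordinate undetermined. The cleanest way to keep it in hand is to exploit the dihedral symmetry of $N_5$: the $50$-dimensional space of velocity assignments splits into $D_4$-isotypic components, the rigidity map respects this splitting, and the trivial motions occupy only the $E\oplus A_2$ part; so one need only check that the rigidity map is injective on each component (modulo the trivial motions living in the $E\oplus A_2$ part), which replaces one fifty-variable computation by a handful of much smaller ones and confines any possible kernel to where the three trivial motions already account for it. Of course the rank can also be confirmed directly by computer, but the symmetry reduction makes the by-hand verification feasible.
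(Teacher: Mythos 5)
Your setup is sound: pinning $g(0,0)=0$ by a translation and then killing the $(1,2)^{\perp}$-component of $g(1,2)$ by a rotation about the origin is a legitimate way to spend the three degrees of gauge freedom (it parallels the paper's normalization $f(p_{13})=f_1(p_2)=0$), your count of $48$ bars versus the minimum $47=2\cdot 25-3$ is right, and your remark that Laman-type genericity would not suffice for this particular integer realization is well taken. But the proof stops exactly where the theorem begins. ``One checks that iterating this forces $g\equiv 0$'' is the entire content of the statement: with only the raw bar equations, each bar gives one scalar condition on a $50$-dimensional (after gauge-fixing, $47$-dimensional) space, and a joint reached by propagation is pinned to a \emph{line}, not to zero, until enough independent directions accumulate at it. Whether the propagation closes up with no residual kernel is precisely what must be verified, and nothing in the proposal verifies it. The same applies to the $D_4$ idea: identifying that the trivial motions sit in $E\oplus A_2$ is correct and the block-diagonalization would indeed shrink the computation, but no isotypic component is actually analyzed, so no injectivity is established.

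For comparison, the paper's device for making the hand computation feasible is not symmetry but the Rhombus Lemma: any four joints forming a non-degenerate rhombus (which knight's moves produce in abundance) satisfy the \emph{vector} identity $v_1+v_3=v_2+v_4$, i.e.\ two scalar equations per rhombus rather than one per bar. This lets the authors express all $25$ velocity vectors homogeneously in terms of five of them, derive a handful of consistency relations from rhombi counted twice, and only at the very end invoke two bar equations in independent directions to conclude $v_{10}=0$. If you want to complete your argument by hand, you need either this lemma (or an equivalent amplification of the bar constraints) or the fully worked-out symmetry reduction; as written, the proposal is a plan for a proof rather than a proof.
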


The reader can skip the proof and refer to the program of Appendix 2, where the rigidity of $N_5$ is verified using the rigidity matrix. The rank of the rigidity matrix can also be computed without computer aid; however, it is a system of $50$ variables. The following lemma reduces the number of variables and facilitates a shorter by-hand proof of Theorem~\ref{5x5}.

\begin{lemma} (Rhombus Lemma) \label{rhombus} Let $p_1p_2p_3p_4$ be a framework of a non-degenerate rhombus in the plane. If $v_1,v_2,v_3,v_4$ are the velocity vectors associated with any infinitesimal motion of the rhombus, then $v_1+v_3 = v_2+v_4$. 

\end{lemma}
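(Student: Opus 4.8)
The plan is to read off condition~(\ref{infdef}) for the four bars of the rhombus and combine them by simple addition. The bars of $p_1p_2p_3p_4$ are its four sides $p_1p_2,\ p_2p_3,\ p_3p_4,\ p_4p_1$ (the diagonals are \emph{not} bars, and will not be used). So an infinitesimal motion $v_1,\dots,v_4$ must satisfy
\[
(v_1-v_2)\cdot(p_1-p_2)=0,\quad (v_2-v_3)\cdot(p_2-p_3)=0,\quad (v_3-v_4)\cdot(p_3-p_4)=0,\quad (v_4-v_1)\cdot(p_4-p_1)=0.
\]

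Next I would exploit the fact that a rhombus is in particular a parallelogram (in cyclic order): this gives $p_1-p_2=p_4-p_3$ and $p_2-p_3=p_1-p_4$, so the four edge vectors form two antiparallel pairs. Writing $a:=p_1-p_2=p_4-p_3$ and $b:=p_2-p_3=p_1-p_4$, the four equations above become $(v_1-v_2)\cdot a=0$, $(v_3-v_4)\cdot a=0$, $(v_2-v_3)\cdot b=0$, $(v_4-v_1)\cdot b=0$, where I have absorbed the harmless signs coming from $p_3-p_4=-a$ and $p_4-p_1=-b$.

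Then I would add the two equations involving $a$ and, separately, the two involving $b$. Setting $w:=(v_1+v_3)-(v_2+v_4)=v_1-v_2+v_3-v_4$, the first sum gives $w\cdot a=0$ and the second gives $-w\cdot b=0$, i.e. $w\cdot b=0$. Thus $w$ is orthogonal to both edge directions $a$ and $b$. Finally, non-degeneracy of the rhombus means exactly that $a$ and $b$ are linearly independent, hence span $\mathbb{R}^2$; a vector orthogonal to a spanning set is zero, so $w=0$, which is the claimed identity $v_1+v_3=v_2+v_4$.

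There is no real obstacle here — the whole argument is a three-line piece of linear algebra. The only points requiring a little care are: (i) using only the four side-bars (the diagonals of a rhombus framework are not present); (ii) tracking the signs correctly when identifying antiparallel edges; and (iii) invoking non-degeneracy explicitly to guarantee that $a$ and $b$ span the plane. I may also remark that the argument uses only the parallelogram structure, not the equality of side lengths, so the same conclusion holds for any non-degenerate parallelogram; the rhombus version is the one applied in the proof of Theorem~\ref{5x5}.
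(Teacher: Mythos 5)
Your proof is correct and is essentially the paper's own argument: both identify the two pairs of parallel sides, add the corresponding pairs of bar constraints, and conclude from the linear independence of the two edge directions guaranteed by non-degeneracy. Your closing remark that only the parallelogram structure is used (not equal side lengths) is accurate but does not change the substance.
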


\begin{proof} Put $x = p_2-p_1 = p_3 - p_4$ and $y = p_3-p_2 = p_4-p_1$. We have:

\begin{align*}
(v_2-v_1) \cdot x & = 0, \\
(v_3-v_2) \cdot y & = 0, \\
(v_4-v_3) \cdot x & = 0, \\
(v_1-v_4) \cdot y & = 0. \\
\end{align*}
The first and third equation give $(v_1+v_3) \cdot x = (v_2+v_4) \cdot x$, while the second and fourth give $(v_1+v_3) \cdot y = (v_2+v_4) \cdot y$. Since $x$ and $y$ are linearly independent, we have the desired result. 
\end{proof}

If $f \colon \mathbb{R}^d \to \mathbb{R}^d$ is a function, let $f_k(x)$ denote the value in the $k^{th}$ coordinate of $f(x)$.

\begin{proof}[Proof of Theorem~\ref{5x5}] Let the joints of $N_5$ from left to right, top to bottom be $p_1,p_2,\ldots,p_{25}$. Consider all infinitesimal motions $f$ of $N_5$ such that
\begin{equation}\label{2freeze}
f(p_{13}) = f_1(p_2)=0.
\end{equation}
This specifies three degrees of freedom of $f$, so the dimension of the space of infinitesimal motions of $N_5$ that satisfy (\ref{2freeze}) is at most three less than the dimension of the space of all infinitesimal motions of $N_5$. Since the space of infinitesimal motions of the plane has dimension three, if all infinitesimal motions $f$ of $N_5$ that satisfy (\ref{2freeze}) are identically zero then $N_5$ is infinitesimally rigid. Let $f$ be an infinitesimal motion of $N_5$ satisfying (\ref{2freeze}) and put $f(p_i) = v_i$ for all $i$. Since $p_2p_{13}$ is a bar we have that $v_2 = 0$. Using Lemma~\ref{rhombus}, we are able to determine all velocities $v_i$ homogenously in terms of the velocities $v_4,v_6,v_{10},v_{20}$, and $v_{22}$. The first equation in every line below follows from an application of Lemma~\ref{rhombus} to a rhombus in $N_5$, a second equation in any line is a substitution of a previous equation. We have:
\begin{align}
v_3 &= v_6+v_{10} \nonumber \\
v_{11} &=  v_{22} \nonumber \\
v_{15} &= v_4 + v_{24} \nonumber \\
v_{23} &= v_{16}+v_{20} \nonumber \\
v_7 &= v_4+v_{16}\nonumber \\
v_9 &= v_{20}\nonumber \\
v_{17} &= v_6+v_{24}\nonumber \\
v_{19} &=v_{10}+v_{22}\nonumber \\
v_8 &= v_{11} + v_{19} - v_{22} =  v_{10} + v_{22}\label{8.1} \\
v_8 &= v_{15} + v_{17} - v_{24} = v_4+v_6+v_{24} \label{8.2} \\
v_{12} &= v_9+v_{23} - v_{20} =  v_{16} + v_{20} \label{12.1} \\
v_{12} &=v_3 + v_{19} - v_{10} = v_6 + v_{10} + v_{22} \label{12.2} \\
v_{14} &= v_3 + v_{17} - v_6 = v_6 + v_{10} + v_{24}\label{14.1} \\
v_{14} &= v_7+v_{23} - v_{16} = v_4 + v_{16} + v_{20}\label{14.2} \\
v_{18} &= v_7+v_{15} - v_4 = v_4 + v_{16} + v_{24}\label{18.1} \\
v_{18} &= v_9+v_{11}- v_2 =  v_{20} + v_{22}\label{18.2} \\
v_1 &= v_8+v_{12} - v_{19} =  v_{16} + v_{20}\nonumber \\
v_5 &= v_8 + v_{14}  - v_{17} =  2v_{10} + v_{22}\nonumber \\
v_{21} &= v_{12} + v_{18} - v_9 = v_4 + 2v_{16} + v_{24}\nonumber \\
v_{25} & = v_{14} + v_{18} - v_7 = v_6 + v_{10} + 2v_{24}\nonumber \\
v_{11} + v_{15} &= v_8+ v_{18} \Rightarrow  v_{16} = - v_{10}\nonumber \\
v_3+v_{23} & = v_{12} + v_{14} \Rightarrow v_{24} = 0.\nonumber 
\end{align}
Equating equations (3),(4) and (9),(10) gives 
\begin{align}
v_{10} + v_{22} & = v_4 + v_6 + v_{24} \\
v_4 + v_{16} + v_{24} &=  v_{20} + v_{22} \nonumber .
\end{align}
Adding the above equations gives $v_6 + v_{20} =v_{10} + v_{16} =  0$. Equating equations (5),(6) and (7),(8) gives
\begin{align}
 v_{16} + v_{20} & = v_6 + v_{10} + v_{22} \\
v_6 + v_{10} + v_{24} &= v_4 + v_{16} + v_{20} \nonumber .
\end{align}
Adding the above equations gives $v_4 + v_{22} =  v_{24} = 0$. Substituting into (11) and (12) we obtain:
\begin{align*}
v_{10}- v_{4} & = v_4 + v_{6} \\
 - v_{10} - v_{6} &= v_6 + v_{10} - v_{4}  .
\end{align*}
The above system gives $v_4 = \frac{4}{5}v_{10}$ and $v_6 = -\frac{3}{5}v_{10}$. Now we see that all velocities are scalar multiples of $v_{10}$. Since $p_{10}p_{13}$ is a bar we have that $v_{10} \cdot (p_{10} - p_{13}) = 0$. Since $p_3p_{10}$ is a bar we have that $(v_3-v_{10}) \cdot (p_3-p_{10}) = v_6 \cdot (p_3-p_{10}) = -\frac{3}{5} v_{10} \cdot (p_3-p_{10}) = 0$. The directions of the bars $p_{10}p_{13}$ and $p_3p_{10}$ are linearly independent, and so $v_{10} = 0$. It follows that all velocities are zero and $N_5$ is infinitesimally rigid. 
\end{proof} 

The framework obtained by deleting the corner joints and one degree three joint from the $5 \times 5$ knight's framework is also infinitesimally rigid. This framework has 20 joints and 37 edges. The rigidity of this framework can be verified using a similar approach to the above, or by calculating the rank of its rigidity matrix. Every joint in the $5 \times 6$ knight's framework that is not in the $5 \times 5$ framework has two bars in linearly independent directions connecting it to the $5 \times 5$ framework, and so the $5 \times 6$ framework is infinitesimally rigid. Inductively we see that the $m \times n$ knight's framework is infinitesimally rigid for all $m,n \geq 5$. The knight's framework can be extended to higher dimensions.


\begin{defn} An $n$-lattice framework in $\mathbb{R}^d$ has joints of the form $(x_1,\ldots,x_d)$, where $x_i \in \{0,1,\ldots,n-1\}$. Let $F$ be an $n$-lattice framework in $\mathbb{R}^d$. Define $F_{i,c}$ to be the subframework of $F$ induced by all joints in $F$ of the form $(x_1,\ldots,x_{i-1},c,x_{i+1},\ldots,x_{d})$. The framework $F_{i,c}$ can be embedded in $\mathbb{R}^{d-1}$ by contracting the $i^{th}$ coordinate of all joints. The resulting framework is an $n$-lattice framework in $\mathbb{R}^{d-1}$, call it $F_{i,c}'$. 
\end{defn} 

The frameworks $F_{i,c}$ are slices of the framework $F$. In Figure 2, the slice $F_{1,1}$ is infinitesimally flexible, but $F_{1,1}'$ is infinitesimally rigid. 

\begin{figure}[h!]
    \centering
    \begin{subfigure}[t]{0.5\textwidth}
        \centering
     \begin{tikzpicture}[scale=1]
     
     \filldraw
     (0,0,0) circle (2pt) 
     (3,0,0) circle (2pt) node[align=center,above right] {$(1,1,0)$}
     (0,3,0) circle (2pt)
     (3,3,0) circle (2pt)
     (0,0,3) circle (2pt)
     (3,0,3) circle (2pt)
     (0,3,3) circle (2pt)
     (3,3,3) circle (2pt) node[align=center,below left] {$(1,0,1)$};
     \draw
     (0,0,0)--(3,0,0)
     (0,0,0)--(0,3,0)
     (0,0,0)--(0,0,3)
     (3,3,3)--(0,3,3)
     (3,3,3)--(3,0,3)
     (3,3,3)--(3,3,0)
     (3,3,0)--(3,0,0)
     (3,3,0)--(0,3,0)
     (3,0,3)--(3,0,0)
     (3,0,3)--(0,0,3)
     (0,3,3)--(0,3,0)
     (0,3,3)--(0,0,3)
     (3,0,0)--(3,3,3)
     (3,3,0)--(3,0,3);

     \end{tikzpicture}
        \caption{A 2-lattice framework $F$ in $\mathbb{R}^3$}
    \end{subfigure}%
    ~ 
    \begin{subfigure}[t]{0.5\textwidth}
        \centering
     \begin{tikzpicture}[scale=1]
     \filldraw
     (0,0) circle (2pt) --
     (3,0) circle (2pt) --
     (3,3) circle (2pt) --
     (0,3) circle (2pt) --
     (0,0);
     \draw
     (0,0) -- (3,3) 
     (3,0) --(0,3);

    \end{tikzpicture}
        \caption{The slice $F_{1,1}$}
    \end{subfigure}
    \caption{}
\end{figure}

\begin{lemma}\label{span} Let $y,x_1,x_2,\ldots,x_n$ be joints of a framework $F$ such that $yx_i$ is a bar for all $i$. Let $f$ be an infinitesimal motion of $F$ such that $f(x_i) = 0$ for all $i$. If $z$ is in the span of $\{y-x_1,\ldots,y-x_n\}$, then $f(y) \cdot z = 0$. 
\end{lemma}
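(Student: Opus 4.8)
The plan is to unwind the definition of an infinitesimal motion of the framework at each bar $yx_i$ and then invoke bilinearity of the dot product. Since $yx_i$ is a bar of $F$ and $f$ is an infinitesimal motion of $F$, equation~(\ref{infdef}) gives $\left(f(y) - f(x_i)\right) \cdot (y - x_i) = 0$ for every $i$. I would then substitute the hypothesis $f(x_i) = 0$ to reduce this to the cleaner statement $f(y) \cdot (y - x_i) = 0$ for all $i = 1, \ldots, n$.

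Next I would record that $f(y)$ is therefore orthogonal to each of the vectors $y - x_1, \ldots, y - x_n$, and hence, by linearity of the dot product in its second argument, orthogonal to every vector in their linear span. Writing $z = \sum_{i=1}^n c_i (y - x_i)$ for suitable scalars $c_i$, one computes $f(y) \cdot z = \sum_i c_i\, f(y) \cdot (y - x_i) = 0$, which is exactly the claim.

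There is no real obstacle here: the lemma is a direct consequence of the definition of an infinitesimal motion together with the bilinearity of the inner product, so the "proof" is essentially the two displayed lines above. The only thing to be careful about is the sign/indexing convention — using $y - x_i$ consistently (rather than $x_i - y$) so that the spanning set in the statement matches the orthogonality relations derived from~(\ref{infdef}) — but since $f(y) \cdot (y - x_i) = 0$ iff $f(y) \cdot (x_i - y) = 0$, even this is immaterial.
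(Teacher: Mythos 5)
Your proof is correct and follows exactly the same route as the paper's: apply the infinitesimal-motion condition to each bar $yx_i$, use $f(x_i)=0$ to get $f(y)\cdot(y-x_i)=0$, and conclude by linearity of the dot product over the spanning set. No gaps.
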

\begin{proof} Let $z = a_1(y-x_1) + \cdots + a_n(y-x_n)$ with $a_i \in \mathbb {R}$. Since $yx_i$ is an edge, $\left( f(y) - f(x_i) \right) \cdot (y-x_i) = f(y) \cdot (y-x_i) =0$ for all $i$. Hence
\[ f(y) \cdot z = a_1 f(y) \cdot (y-x_1) + \cdots + a_n f(y) \cdot (y-x_n) = 0.\]
\end{proof}

When the dimension is unambiguous, we will use the notation $e_k$ to represent the standard basis vector consisting of a $1$ in the $k^{th}$ entry and zeroes elsewhere. The vector $e_k$ will represent both the direction, and the joint with the corresponding coordinates. The context will make the use clear. 

\begin{theorem}\label{completeslice} Let $F$ be an $n$-lattice framework in $\mathbb{R}^d$, $d \geq 3$, and $n \geq 2$. If for all $1 \leq i \leq d$ and $0 \leq c \leq n-1$, the framework $F_{i,c}$ has bars between all pairs of joints, then $F$ is infinitesimally rigid. 
\end{theorem}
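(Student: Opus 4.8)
The plan is to slice $F$ into its layers $F_{1,0},F_{1,1},\dots,F_{1,n-1}$, each sitting in a hyperplane $x_1=c$, first to prove that the two-layer subframework $H:=F[F_{1,0}\cup F_{1,1}]$ is infinitesimally rigid in $\mathbb R^d$, and then to attach the remaining layers one at a time using Lemma~\ref{span}. Throughout I will use the standard fact (which itself follows from Lemma~\ref{span}) that the complete graph on a finite point set affinely spanning $\mathbb R^{k}$ is infinitesimally rigid in $\mathbb R^{k}$: since each projected slice $F_{1,c}'$ is a complete graph on a set containing the affine basis $0,e_1,\dots,e_{d-1}$ of $\mathbb R^{d-1}$, every $F_{1,c}$ is infinitesimally rigid within its hyperplane.

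To treat $H$, let $f$ be an infinitesimal motion of $H$ and split $f(x)=f_1(x)e_1+\bar f(x)$, where $\bar f(x)$ has vanishing first coordinate; write $\bar x$ for the vector of last $d-1$ coordinates of a joint $x$. The difference of two joints in a common layer is orthogonal to $e_1$, so $\bar f$ restricted to $F_{1,c}$ (read in $\mathbb R^{d-1}$) is an infinitesimal motion of a complete graph on a spanning set; hence $\bar f(c,\bar x)=B_c\bar x+c_c$ with $B_c$ skew-symmetric, for $c\in\{0,1\}$. The bars $(0,\bar x)(1,\bar x)$, present in $F$ because these joints agree in coordinates $2,\dots,d$, force $f_1(0,\bar x)=f_1(1,\bar x)=:\psi(\bar x)$.

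The crux is to kill the relative infinitesimal rotation of the two layers, i.e.\ to show $B_0=B_1$, using the diagonal bars $(0,\bar x)(1,\bar x+e_i)$. Such a bar lies in $F$ precisely because $\bar x$ and $\bar x+e_i$ agree in $d-2\ge1$ of the coordinates $2,\dots,d$, and this is the only place where the hypothesis $d\ge3$ enters. Writing out that bar's length equation and using $(B_c)_{ii}=0$ yields the first-difference identity $\psi(\bar x+e_i)-\psi(\bar x)=-[(B_1-B_0)\bar x]_i-(c_1-c_0)_i$. Comparing the two ways of passing from $\bar x$ to $\bar x+e_i+e_j$ — all four lattice points occur since $n\ge2$ — gives $(B_1-B_0)_{ij}=(B_1-B_0)_{ji}$, so the skew-symmetric matrix $B_1-B_0$ vanishes. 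Then $\psi$ has constant first differences, hence is affine with gradient $-(c_1-c_0)$, while $\bar f|_{F_{1,1}}-\bar f|_{F_{1,0}}$ is the constant $w:=c_1-c_0$. Packaging $B_0$ and $w$ into a skew-symmetric $d\times d$ matrix and adding a suitable constant vector produces an infinitesimal motion $g$ of $\mathbb R^d$ with $g|_{V(H)}=f$, so $H$ is infinitesimally rigid.

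For the full framework, let $f$ be an infinitesimal motion of $F$; since $F_{1,0}\cup F_{1,1}$ affinely spans $\mathbb R^d$ and $H$ is infinitesimally rigid, subtracting an infinitesimal motion of $\mathbb R^d$ arranges $f\equiv0$ on $F_{1,0}\cup F_{1,1}$. I then show $f\equiv0$ on $F_{1,c}$ by induction on $c\ge1$: each joint $(c,\bar x)$ is joined in $F$ to $(c-1,\bar x)$ and, for every $i$, to one of $(c-1,\bar x\pm e_i)$ lying in the lattice (again because they agree in $d-2\ge1$ coordinates) — that is, to joints of the already-vanished layer $F_{1,c-1}$ by bars whose directions, $e_1$ together with one of $e_1\pm e_{i+1}$ for each $i$, span $\mathbb R^d$ — so Lemma~\ref{span} gives $f(c,\bar x)=0$. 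Hence $f\equiv0$ on $F$, and $F$ is infinitesimally rigid. I expect the crux step to be the main obstacle: one must extract from the comparatively sparse diagonal cross-bars enough linear relations to eliminate the relative rotation of the two layers, and it is there, through the commuting-first-differences computation, that the restriction $d\ge3$ becomes indispensable; the rigidity of a complete graph on a spanning set and the layer-by-layer propagation are routine once Lemma~\ref{span} is available.
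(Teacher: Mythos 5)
Your proof is correct, and it takes a genuinely different route from the paper's. The paper pins $\binom{d+1}{2}$ coordinates of the motion at the joints $0,e_1,\ldots,e_{d-1}$ (condition (\ref{pinned})) and then kills the motion joint by joint with Lemma~\ref{span}, ordering the joints by how many zero coordinates they have; its only structural input is that any two lattice points agreeing in some coordinate lie in a common complete slice and hence are joined by a bar. You instead first prove that the two-layer slab $F[F_{1,0}\cup F_{1,1}]$ is infinitesimally rigid on its own, by classifying the motions of each complete layer as $\bar f = B_c\bar x + c_c$ with $B_c$ skew-symmetric and eliminating the relative rotation $B_1-B_0$ through a discrete ``mixed partials commute'' computation on the diagonal cross-bars, and only then attach the remaining layers with Lemma~\ref{span}. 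Your version is longer and invokes the classification of infinitesimal motions of a complete graph on an affinely spanning set, but it buys a stronger intermediate statement (the slab is itself rigid) and isolates exactly why $d\ge 3$ is needed: two lattice points differing in exactly two coordinates must still share a complete slice --- which is in fact the same place $d\ge 3$ enters the paper's argument (the bars $e_ie_j$ and $yy^{(i)}$). One point of polish: in the commuting-differences step, the four points $\bar x,\bar x+e_i,\bar x+e_j,\bar x+e_i+e_j$ need not all lie in the lattice for every $\bar x$ when $n=2$; you should say that for each pair $i\neq j$ some such $\bar x$ exists (e.g.\ $\bar x=0$), which is all the argument requires since a single instance already forces $(B_1-B_0)_{ij}=(B_1-B_0)_{ji}=0$.
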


\begin{proof} Consider all infinitesimal motions $f$ of $F$ such that 
\begin{equation}\label{pinned}
f(0) = 0, \quad \text{and} \quad f_i(e_k) = 0 \quad \text{for} \quad 1 \leq k \leq d-1 \quad \text{and} \quad k+1 \leq i \leq d .
\end{equation}
The restrictions of (\ref{pinned}) specify $d+(d-1) + \ldots + 1 = \binom{d+1}{2}$ degrees of freedom of $f$. Hence the space of infinitesimal motions of $F$ that satisfy (\ref{pinned}) is at most $\binom{d+1}{2}$ less than the dimension of the space of all infinitesimal motions of $F$. It follows that if the only infinitesimal motions of $F$ that satisfy (\ref{pinned}) are identically zero, then $F$ is infinitesimally rigid. \\

Let $f$ be an infinitesimal motion of $F$ satisfying (\ref{pinned}). Note that $e_10$ is a bar of $F$ and $e_1$ is in the span of $\{e_1-0\}$. Since $f(0) = 0$, by Lemma~\ref{span} we see that $ f(e_1) \cdot e_1= 0$ and so $f(e_1) = 0$. Notice $e_i0$ is a bar for all $1 \leq i \leq d$. For all $j \neq i$, since $d\geq 3$, we have that $e_ie_j$ is also a bar. A simple induction and Lemma~\ref{span} gives the result $f(e_i) = 0$ for all $1 \leq i \leq d$. For any joint $x \in F_{i,0}$ we have that $x0$ and $xe_j$ are bars for all $j \neq i$. Lemma~\ref{span} gives $f_j(x) = 0$ for all $j \neq i$. Hence if a joint $x$ has a zero in two or more coordinates, $f(x) = 0$. Let $y = (y_1,\ldots,y_d)$ be a joint of $F$ such that $y_i \neq 0$ for all $i$. Let $y^{(i)}$ be the joint with $y_i$ in the $i^{th}$ coordinate and zeros in all other coordinates. Notice that $yy^{(i)}$ is a bar for all $1 \leq i \leq d$. Furthermore, since $d \geq 3$, $y^{(i)}$ has a zero in at least two coordinates, and so $f(y^{(i)}) = 0$. It is easy to check that the span of $\{ y - y^{(i)} \}_{1 \leq i \leq d}$ is all of $\mathbb{R}^d$, and so by Lemma~\ref{span}, $f(y) = 0$. Finally, let $x = (x_1,\ldots,x_d)$ be a joint of $F$ such that $x_i = 0$ and $x_j \neq 0$ for $j \neq i$. Let $z$ be the joint $(x_1,\ldots,x_{i-1},1,x_{i+1},\ldots,x_d)$. The existence of $z$ follows from $n \geq 2$. Since $xz$ is a bar: 
\[ \left(f(x) - f(z) \right) \cdot (x-z) = \left(f(x) - f(z) \right) \cdot e_i = 0 .\]
Since all coordinates of $z$ are nonzero, $f(z) = 0$, and so $f_i(x) = 0$. It follows that $f \equiv 0$, and $F$ is an infinitesimally rigid framework.

\end{proof}

\begin{cor}\label{infrigslice} Let $F$ be an $n$-lattice framework in $\mathbb{R}^d$, $d \geq 3$ and $n \geq 2$. If for all $1 \leq i \leq d$, and $0 \leq c \leq n-1$, the framework $F_{i,c}'$ is infinitesimally rigid, then $F$ is infinitesimally rigid. 

\begin{proof} Let $1 \leq i \leq d$ and $0 \leq c \leq n_i-1$ be arbitrary. Any infinitesimal motion $f$ of $F$ induces an infinitesimal motion $f_{i,c}$ of $F_{i,c}$ in the following way. For any joint $x \in F_{i,c}'$ let $\hat{x}$ denote the corresponding joint in $F_{i,c}$, and put 
\[ f_{i,c}(x) = \left[ f_1(\hat{x}) \ldots  f_{i-1}(\hat{x}) \ f_{i+1} (\hat{x}) \ldots f_d(\hat{x}) \right]^t.\]
It is clear that this defines $f_{i,c}$ as a vector field in $\mathbb{R}^{d-1}$. Furthermore, for any bar $xy$ of $F_{i,c}'$, since $f$ is an infinitesimal motion:
\begin{equation}\label{slice}
\left( f_{i,c} (x) - f_{i,c}(y) \right) \cdot (x-y) = \left( f(\hat{x}) - f(\hat{y}) \right) \cdot (\hat{x}-\hat{y}) = 0. 
\end{equation}
It follows that $f_{i,c}$ is an infinitesimal motion. Notice that the first equality in (\ref{slice}) holds for all $x,y \in F_{i,c}'$, and not just bars. Since $F_{i,c}'$ is infinitesimally rigid we see that both equalities in (\ref{slice}) holds for all $x,y \in F_{i,c}$. Hence all infinitesimal motions of $F$ are infinitesimal motions of the framework described in Theorem~\ref{completeslice}, and so $F$ is infinitesimally rigid.

\end{proof}

\end{cor}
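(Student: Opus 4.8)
The plan is to deduce the corollary from Theorem~\ref{completeslice} by comparing $F$ with a larger framework $\hat F$ on the same joint set, obtained from $F$ by inserting, for every $1\le i\le d$ and $0\le c\le n-1$, all bars between pairs of joints lying in the slice $F_{i,c}$. By construction every slice of $\hat F$, in every coordinate direction, is a complete framework, so Theorem~\ref{completeslice} applies and $\hat F$ is infinitesimally rigid. Since $\hat F$ contains $F$, every infinitesimal motion of $\hat F$ is automatically an infinitesimal motion of $F$; what requires proof is the opposite inclusion, that \emph{every} infinitesimal motion of $F$ is also an infinitesimal motion of $\hat F$. Granting this, any infinitesimal motion of $F$ is the restriction of an infinitesimal motion of $\mathbb{R}^d$, which is precisely what infinitesimal rigidity of $F$ means.

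So let $f$ be an infinitesimal motion of $F$, and fix $i$ and $c$. First I would push $f$ down to the flattened slice $F_{i,c}'\subset\mathbb{R}^{d-1}$: for a joint $x\in F_{i,c}'$ with corresponding joint $\hat x\in F_{i,c}$, define $f_{i,c}(x)$ to be $f(\hat x)$ with its $i$-th coordinate deleted. The key observation is that the $i$-th coordinate is constant (equal to $c$) on $F_{i,c}$, so for any $\hat x,\hat y\in F_{i,c}$ the vector $\hat x-\hat y$ has zero $i$-th entry, and therefore
\[
\bigl(f_{i,c}(x)-f_{i,c}(y)\bigr)\cdot(x-y)=\bigl(f(\hat x)-f(\hat y)\bigr)\cdot(\hat x-\hat y)
\]
for \emph{all} $x,y\in F_{i,c}'$. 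Applying this to bars of $F_{i,c}'$ — each of which corresponds to a bar of $F$, on which $f$ satisfies~(\ref{infdef}) — shows that $f_{i,c}$ is an infinitesimal motion of $F_{i,c}'$.

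Now I would invoke the hypothesis that $F_{i,c}'$ is infinitesimally rigid: this forces $f_{i,c}$ to be the restriction to the joints of an infinitesimal motion of $\mathbb{R}^{d-1}$, and such a field satisfies~(\ref{infdef}) for \emph{every} pair of points, in particular for every pair of joints of $F_{i,c}'$, adjacent or not. Feeding this back through the displayed identity gives $\bigl(f(\hat x)-f(\hat y)\bigr)\cdot(\hat x-\hat y)=0$ for every pair $\hat x,\hat y$ in the slice $F_{i,c}$. Since $i$ and $c$ were arbitrary, $f$ satisfies~(\ref{infdef}) on every bar of $\hat F$, i.e.\ $f$ is an infinitesimal motion of $\hat F$. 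As $\hat F$ is infinitesimally rigid, $f$ is a restriction of an infinitesimal motion of $\mathbb{R}^d$, and hence $F$ is infinitesimally rigid.

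The substance of the argument is concentrated in the middle step: the dimension-reducing map $F_{i,c}\to F_{i,c}'$ does not disturb any of the inner products appearing in~(\ref{infdef}) because the collapsed coordinate is constant on the slice, combined with the standard fact that an infinitesimally rigid framework admits no infinitesimal motions beyond those coming from ambient rigid motions, so it acts — with respect to~(\ref{infdef}) — like the complete framework on its joints. I do not expect a genuine obstacle here; the one point to be careful about is that Theorem~\ref{completeslice} requires completeness of the slices in all $d$ coordinate directions simultaneously, which is why the completion $\hat F$ must add the bars inside $F_{i,c}$ for every $i$, not merely for one distinguished coordinate.
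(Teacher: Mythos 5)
Your argument is correct and follows essentially the same route as the paper: project the motion onto each flattened slice $F_{i,c}'$ (noting the collapsed coordinate is constant so inner products are preserved), use infinitesimal rigidity of the slice to upgrade the constraint from bars to all pairs of joints in the slice, and then invoke Theorem~\ref{completeslice}. Your only cosmetic difference is making the completed framework $\hat F$ explicit, where the paper phrases the same step as ``all infinitesimal motions of $F$ are infinitesimal motions of the framework described in Theorem~\ref{completeslice}.''
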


\begin{defn} The $n\times \cdots \times n$ knight's framework in $\mathbb{R}^d$ is the $n$-lattice framework with bars between two joints $x$ and $y$ if the coordinates of $x$ and $y$ are equal except in two places where they differ by $1$ and $2$. 
\end{defn}

All bars in the knight's framework have length $\sqrt{5}$. The parity of the sum of the coordinates of two adjacent joints is different, the same as in the two dimensional case. Hence the knight's framework in $\mathbb{R}^d$ is bipartite, and in particular, triangle free. A consequence of Theorem~\ref{5x5} and Corollary~\ref{infrigslice} is the following. 

\begin{theorem} The $ 5 \times \cdots \times 5$ knight's framework in $\mathbb{R}^d$, for $d \geq 2$, is an infinitesimally rigid bipartite unit-bar framework. 
\end{theorem}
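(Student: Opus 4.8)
The plan is to prove infinitesimal rigidity by induction on the dimension $d$, using Theorem~\ref{5x5} as the base case $d=2$ and Corollary~\ref{infrigslice} for the inductive step. The bipartite and unit-bar properties require no induction: as noted just before the statement, every bar of the knight's framework in $\mathbb{R}^d$ has length $\sqrt5$, and two adjacent joints have coordinate sums of opposite parity (changing one coordinate by $1$ and another by $2$ flips the parity of the sum), so the framework is bipartite. Thus it suffices to prove infinitesimal rigidity.

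For $d=2$ the $5\times5$ knight's framework is infinitesimally rigid by Theorem~\ref{5x5}. Now assume $d\geq3$ and that the $5\times\cdots\times5$ knight's framework in $\mathbb{R}^{d-1}$ is infinitesimally rigid, and let $F$ be the $5\times\cdots\times5$ knight's framework in $\mathbb{R}^d$. The key step is to identify the contracted slices $F_{i,c}'$ for $1\le i\le d$ and $0\le c\le 4$. The joints of $F_{i,c}$ are exactly the points of $\{0,1,2,3,4\}^d$ whose $i^{\text{th}}$ coordinate equals $c$, and two of them are joined by a bar iff their coordinates agree except in two positions where they differ by $1$ and by $2$. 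Since the $i^{\text{th}}$ coordinate is frozen at $c$ for both endpoints, those two active positions necessarily lie among the remaining $d-1$ coordinates. Hence, after contracting the $i^{\text{th}}$ coordinate, $F_{i,c}'$ is precisely the $5\times\cdots\times5$ knight's framework on $\{0,1,2,3,4\}^{d-1}$, which is infinitesimally rigid by the inductive hypothesis. Applying Corollary~\ref{infrigslice} (its hypotheses $d\geq3$ and $n=5\geq2$ being satisfied) gives that $F$ is infinitesimally rigid, completing the induction.

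I do not expect a genuine obstacle here; the only point demanding care is the identification of the slices, namely confirming that restricting the knight's adjacency rule to a coordinate hyperplane $\{x_i=c\}$ and then contracting reproduces the knight's framework one dimension lower. This is immediate from the definition, because a knight's bar leaves the coordinate $x_i$ unchanged exactly when its two active coordinates avoid the index $i$, so no bars of the slice are lost or altered under contraction. It is also worth stating explicitly that the induction is well-founded: the smallest dimension reached, $d=2$, is covered directly by Theorem~\ref{5x5}, and for $d=3$ each slice $F_{i,c}'$ is literally the planar framework $N_5$.
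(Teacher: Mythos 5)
Your proposal is correct and follows exactly the route the paper intends: the paper states the theorem as ``a consequence of Theorem~\ref{5x5} and Corollary~\ref{infrigslice},'' leaving the induction on $d$ and the identification of the contracted slices $F_{i,c}'$ with the $(d-1)$-dimensional knight's framework implicit, and you have simply filled in those details correctly.
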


Using a computer and the rigidity matrix we noticed that the $4 \times 4 \times 4$ knight's framework is infinitesimally rigid. The computer code of this program can be found in Appendix 1. It follows that the $4 \times \cdots \times 4$ knight's framework in $\mathbb{R}^d$ for $d \geq 3$ is also infinitesimally rigid by Corollary~\ref{infrigslice}. 


\section{Unit-bar bipartite frameworks with higher girth}

Erd\H{o}s' construction of many unit distances motivated our approach to finding infinitesimally rigid unit-bar frameworks with larger girth. We consider subframeworks of an $n \times n$ lattice of joints with bars of length $\sqrt{m}$, where $m$ can be written as the sum of two squares in several ways. For odd $m$, two numbers summing to $m$ have different parity. Hence the sum of the coordinates of adjacent joints is different, and the framework is bipartite. One can show that for even $m$ a framework constructed in this way is also bipartite, see for example \cite{b}. The following algorithm gives an outline of how we construct our frameworks. \\

\textbf{Algorithm:}\\
Input: The size $n$ of the square lattice, an integer $m$ that can be written as the sum of two squares in several ways, and the desired girth $2g$.  \\
Output: A bipartite unit-bar framework with girth at least $2g$. 
\begin{enumerate}[(1)]
\item Determine all ordered pairs of integers $(a,b)$ where $a^2+b^2 = m$ and either $b >0$, or $b = 0$ and $a>0$. These are the bar directions, call the set $D$. 
\item Add the joints to the framework, they are at the integer coordinates $(x,y)$ with $0 \leq x,y \leq n-1$. 
\item Find a permutation $\sigma$ of the joints. For each joint $x$ make a list $D(x) = D$ of all possible directions of bars.
\item In the order described by $\sigma$ visit each joint $x$ and do the following. 
\begin{enumerate}[i.] 
\item Randomly select an untried bar direction $d$ from $D(x)$, let $y = x+d$. 
\item If $y$ is a joint in the framework then determine all joints within distance $g-1$ of $x$ and distance $g-2$ of $y$, call these sets $N_x$ and $N_y$. 
\item If $N_x$ and $N_y$ are disjoint then add the bar $xy$ to the framework.  
\item Remove $d$ from $D(x)$. 
\end{enumerate}
\item Repeat (4) until $D(x)$ is empty for all joints $x$, this will take $|D|$ loops. 
\item Remove joints with degree less than three. Output the framework. 
\end{enumerate}

\textbf{Implementation:} We used Python to construct frameworks according to the above algorithm. We tested the infinitesimal rigidity of the outputted framework using the rigidity matrix. The infinitesimal motions of a framework $F$ in $\mathbb{R}^d$ can be described by the nullspace of the rigidity matrix of $F$. The rigidity matrix of a framework with $v$ joints has $vd$ columns. If the nullspace of the rigidity matrix has dimension $\binom{d+1}{2}$ then the framework is infinitesimally rigid. Equivalently, if the rank of the rigidity matrix is $vd - \binom{d+1}{2}$, then the framework is infinitesimally rigid. For more on the rigidity matrix see \cite{gss}. We used built-in functions of Python and Matlab to determine the rank of the rigidity matrix. Not all frameworks constructed according to our algorithm are rigid. For each girth, we experimented with different $m$ and $n$, and used many random trials. The following table describes the smallest infinitesimally rigid framework of each girth we found.

\begin{center}
 \begin{tabular}{||c | c | c | c | c | c ||} 
 \hline
 Girth & Size $n$ & $m$ & \# of Joints & \# of Edges & \# of Trials \\ [0.5ex] 
 \hline\hline
 4 & 5 & $5 = 1^2 + 2^2$ & 21 & 40 & 1 \\ 
 \hline
 6 & 9 & $5 = 1^2 + 2^2$ & 54 & 105 & 16000000\\
 \hline
 8 & 23 & $\begin{aligned} 65 & = 1^2 + 8^2 \\ &= 4^2+7^2 \end{aligned}$ & 436 & 869 & 600000 \\
 \hline
 10 & 53 & $\begin{aligned} 1105 & = 4^2+ 33^2 \\ &= 9^2+33^2 \\ &= 12^2+31^2 \\ &= 23^2+24^2 \end{aligned}$ & 2467 & 4931 & 5000 \\
 \hline
 12 & 147 & $\begin{aligned} 5525 & = 7^2 + 74^2 \\ &= 14^2+73^2 \\ &= 22^2+71^2 \\ &= 25^2+70^2\\ &= 41^2+62^2 \\ &= 50^2+55^2\end{aligned}$ & 18924 &  37845 & 10 \\ [1ex] 
 \hline
\end{tabular}
\end{center}

The Python script we used to construct frameworks and test rigidity is in Appendix 2. For the frameworks with girth 4,6,8 and 10 we used Matlab's rank function to double-check the rank calculations of Python. The Matlab function `svds' computed the smallest singular value of the rigidity matrix of our girth 12 framework to be 0.0005. This value was reproduced upon decreasing the convergence tolerance and increasing the number of iterations of the svd algorithm. This calculation indicates that all singular values of the rigidity matrix are nonzero and the framework is infinitesimally rigid. \\

Below we draw the frameworks in the above table with girth 4,6, and 8. For these frameworks we also record their adjacency matrices below by representing ones with black squares and zeros with white squares. For the frameworks with girth 10 and 12 we record their adjacency matrices using darker shading to represent higher density of edges. 


\begin{figure}[h]
    \centering
    \begin{subfigure}{0.3\textwidth}
        \centering
         \includegraphics[scale=1]{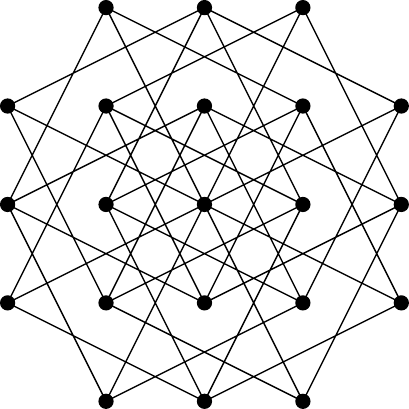}

        \caption{Framework }
    \end{subfigure}%
    ~ 
    \begin{subfigure}{0.3\textwidth}
        \centering
         \includegraphics[scale=0.2,angle=90]{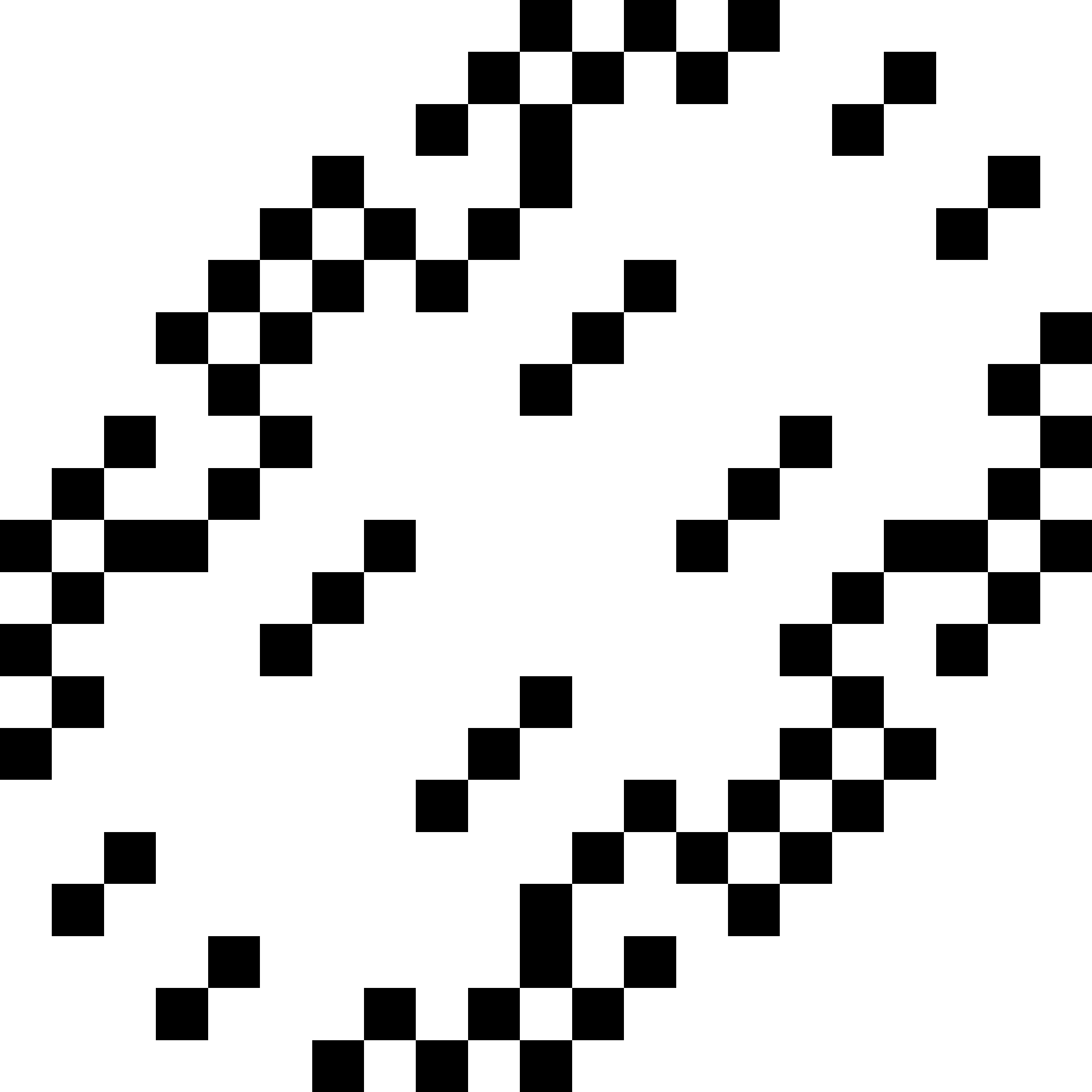}
        \caption{Adjacency matrix}
    \end{subfigure}
    \caption{Girth 4}
\end{figure}


\begin{figure}[h!]
    \centering
    \begin{subfigure}{0.35\textwidth}
        \centering
    \includegraphics[scale=0.64]{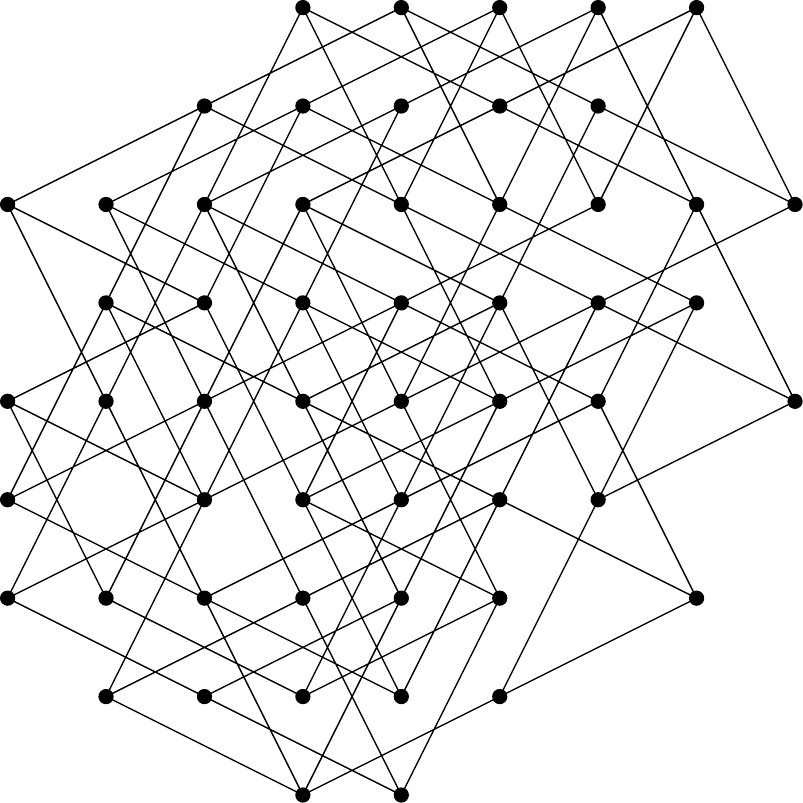}

        \caption{Framework }
    \end{subfigure}%
    ~ 
    \begin{subfigure}{0.35\textwidth}
        \centering
     
      \includegraphics[scale=0.097,angle=90]{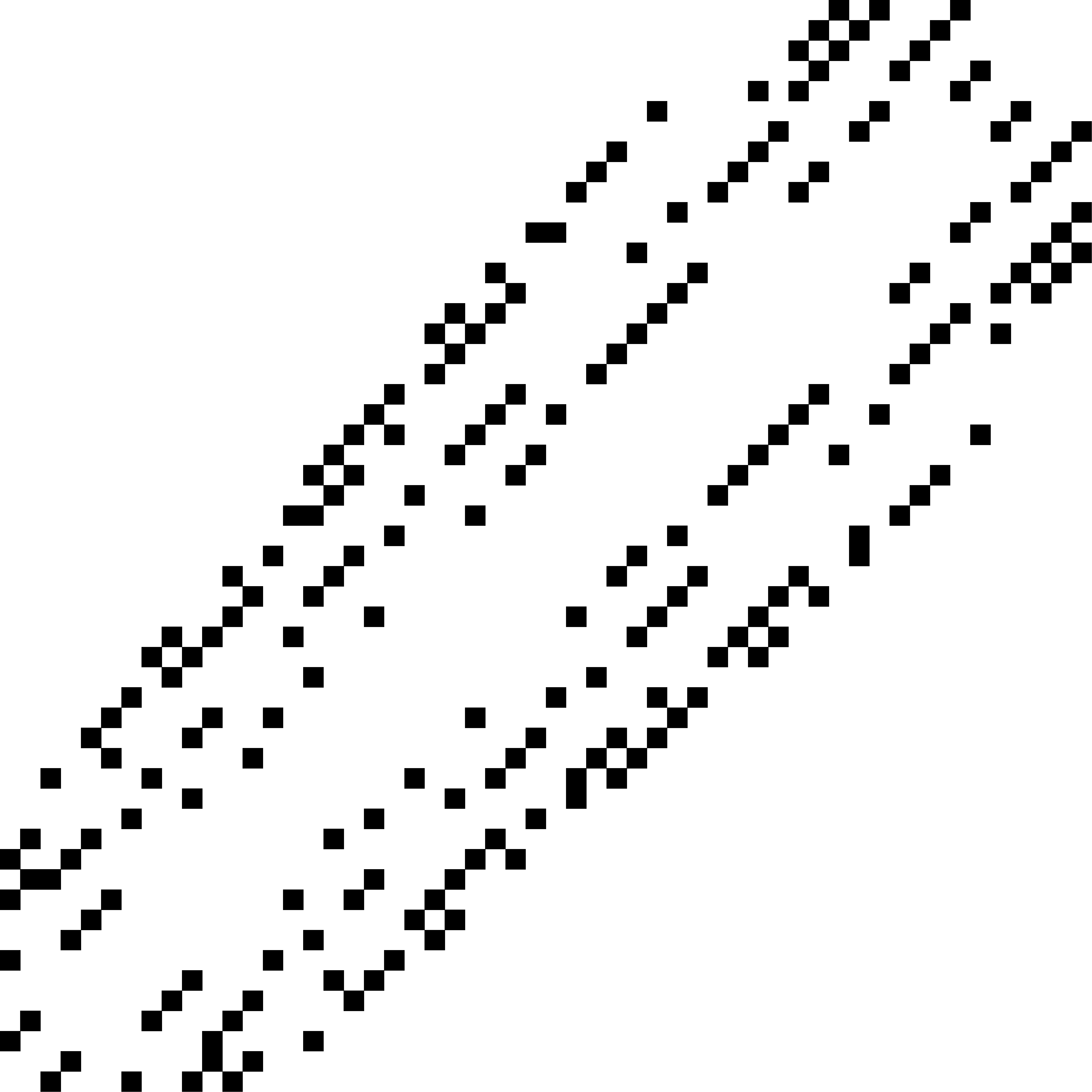}

        \caption{Adjacency matrix}
    \end{subfigure}
    \caption{Girth 6}
\end{figure}


\begin{figure}[h!]
    \centering
    \begin{subfigure}{0.4\textwidth}
        \centering

    \includegraphics[scale=0.24]{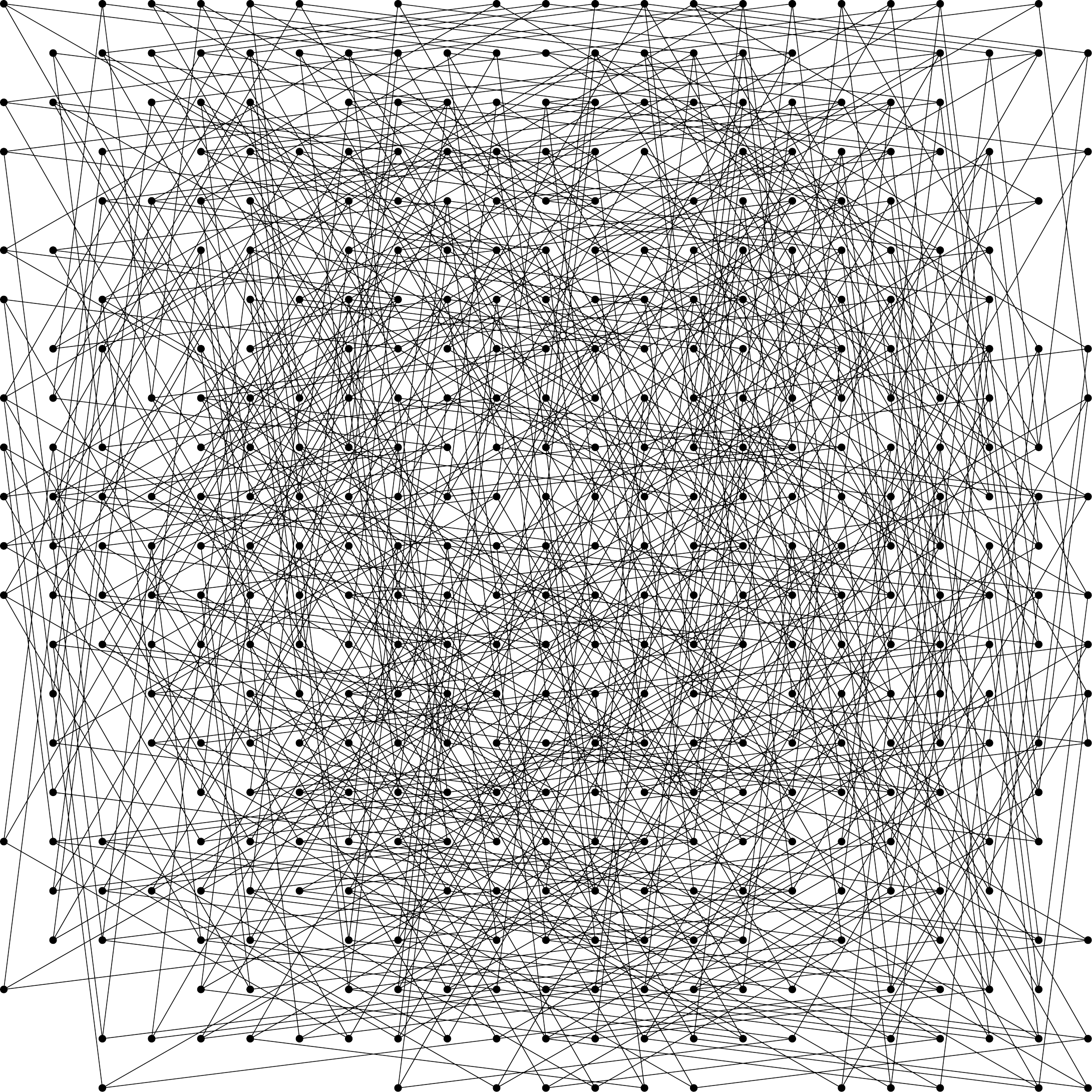}

        \caption{Framework }
    \end{subfigure}%
    ~ 
    \begin{subfigure}{0.4\textwidth}
        \centering

    \includegraphics[scale=0.0122,angle=90]{}

        \caption{Adjacency matrix}
    \end{subfigure}
    \caption{Girth 8}
\end{figure}


\begin{figure}[h!]
    \centering
    \begin{subfigure}{0.4\textwidth}
        \centering
    \includegraphics[scale=0.00975,angle=90]{}

        \caption{Girth 10 }
    \end{subfigure}%
    ~ 
    \begin{subfigure}{0.4\textwidth}
        \centering

    \includegraphics[scale=0.00875,angle=90]{}

        \caption{Girth 12}
    \end{subfigure}
    \caption{}
\end{figure}


\section{Problems}

We are limited by computational power in finding infinitesimally rigid frameworks of higher girth. We expect they exist. \\

\textbf{Problem 1.} Construct an infinitesimally rigid unit-bar framework with arbitrarily large girth. \\

The knight's graph is one instance of an $(a,b)$-leaper graph. This graph has vertices for each square of an $m\times n$ chessboard, and edges for squares with coordinates that differ by $a$ and by $b$. Knuth showed that if $a+b$ and $a-b$ are relatively prime, then for sufficiently large chessboards, the $(a,b)$-leaper graph is connected \cite{k}. The $(a,b)$-leaper framework can be defined analogously to Definition~\ref{knightdef}. We have verified the following for $1 \leq a,b \leq 25$. \\

\textbf{Problem 2.} Prove that the $(a,b)$-leaper framework on an $m \times n$ chessboard is infinitesimally rigid if and only if $a+b$ is relatively prime to $a-b$ and $m,n \geq 2(a+b)-1$. \\

We expect that with more random trials smaller rigid frameworks can be found. \\

\textbf{Problem 3.} Determine the fewest number of joints in a infinitesimally rigid unit-bar framework for each girth $g \geq 4$.

\section{Acknowledgements} The research of the second author was supported in part by an NSERC CGS M. The research of the first author was supported in part by an NSERC Discovery grant and OTKA NK grant. The work of the first author was also supported by the European Research Council (ERC) under the European Union's Horizon 2020 research and innovation programme (grant agreement No. 741420, 617747, 648017).



\section{Appendix 1}

This python program is used to verify the rigidity of the knight's framework in $\mathbb{R}^3$, in particular the $4 \times 4 \times 4$ knight's framework. 

\begin{lstlisting}[basicstyle = \small]
#This program determines if the knight graph
#in three dimensions is rigid.
#L,M,N are the dimensions of the lattice
#a,b are the lengths of the knight's leaps

from numpy.linalg import matrix_rank

#Specify parameters

L=4
M=4
N=4
a=1
b=2

#Construct matrix of zeros with dimensions of rigidity matrix

numvert = L*M*N
numedges = (N*(2*(L-a)*(M-b)+2*(L-b)*(M-a))+L*(2*(M-a)*(N-b)
            +2*(M-b)*(N-a))+M*(2*(N-a)*(L-b)+2*(N-b)*(L-a)))

matrix = []

for r in range(numedges):
    matrix.append([])
    for c in range(3*N**3):
        matrix[r].append(0)

#Create a list of the possible edge directions
edges = []
for twoplace in range(0,3):
    for oneplace in range(0,3):
        if twoplace != oneplace:
            edge = []
            for i in range(3):
                edge.append(0)
            edge[twoplace] = b
            edge[oneplace] = a
            edges.append(edge)

            edge = []
            for i in range(3):
                edge.append(0)
            edge[twoplace] = -b
            edge[oneplace] = a
            edges.append(edge)

#This function determines if a coordinate is
#inside the lattice

def inrange(coor):
    good = 0

    if coor[0] not in list(range(L)):
        good +=1
    if coor[1] not in list(range(M)):
        good +=1
    if coor[2] not in list(range(N)):
        good +=1

    return good

#Place all edges into the matrix,
#entry is position of current row to be added
entry = 0

for y in range(0,N):
    for x in range(0,M):
        for w in range(0,L):
            for e in edges:
                w2 = w + e[0]
                x2 = x + e[1]
                y2 = y + e[2]
                #If edge is in lattice, then add to matrix
                if inrange([w2,x2,y2])== 0:
                
                    cw = (M*L*y+L*x+w)*3
                    cw2 = (M*L*y2+L*x2+w2)*3

                    matrix[entry][cw] = (-1)*e[0]
                    matrix[entry][cw+1] = (-1)*e[1]
                    matrix[entry][cw+2] = (-1)*e[2]
     
                    matrix[entry][cw2] = e[0]
                    matrix[entry][cw2+1] = e[1]
                    matrix[entry][cw2+2] = e[2]      
                    entry+=1

print('Required rank for rigidity:')
print(3*L*M*N-6)
print('Rank of rigidity matrix:')
print(matrix_rank(matrix))

\end{lstlisting}


\section{Appendix 2}
The following python program is our implementation of the algorithm outlined in Section 3. We provide comments in the script that reference the steps described in the algorithm.

\begin{lstlisting}[basicstyle = \small]
from collections import deque
from numpy.random import permutation
import random
from numpy.linalg import matrix_rank

#Input n the size, m the square of bar length, g the girth

n = int(input('Dimension of Grid (n): '))
m = int(input('Square length of bars (m): '))
g = int(input('Girth of Framework (g): '))
num_trials = int(input('Number of Trials: '))
use_python_rank ='y'
#for large frameworks, we use function 'svds' in matlab
if num_trials ==1:
    use_python_rank = input('Use python rank? (y/n): ')
    
#determine g-1 and g-2
g1 = g//2 -1
g2 = g1-1

#Step 1: Determine all bar directions
D = [[],[]]
for a in range(0,m+1):
    for b in range(a,m+1):
        if a**2+b**2 == m:
            if a==b:
                D[0].append(a)
                D[1].append(a)
                D[0].append(-a)
                D[1].append(a)
            elif a==0:
                D[0].append(b)
                D[1].append(0)
                D[0].append(0)
                D[1].append(b)
            else:
                D[0].append(b)
                D[1].append(a)
                D[0].append(a)
                D[1].append(b)
                D[0].append(-a)
                D[1].append(b)
                D[0].append(-b)
                D[1].append(a)
num_directions = len(D[0])

def makeframework():
    F = {}
    checked = {}
    #Step 2: Add joints to framework as keys to a dictionary
    #Values of the dictionary are neighbours (empty)
    for v in range(0,n**2):
        F[v] = []
        checked[v] = []
        for d in range(num_directions):
            checked[v].append(d)
    #Step 3 get a permutation of the joints
    order = permutation(n**2)
    rounds = 0
    #The below loop is Step 5
    while rounds < num_directions:
        #Step 4
        for v1 in order:
            #Get coordinates of joint
            y1 = v1//n
            x1 = v1 - n*y1
            #Step 4.i
            t = random.choice(checked[v1])
            x2 = x1 + D[0][t]
            y2 = y1 + D[1][t]
            #Step 4.ii
            if x2 <= n-1 and x2 >= 0 and y2 <= n-1 and y2 >= 0:
                v2 = x2 + y2*n
                N_v1 = neighbourhood(v1,g1,F)
                N_v2 = neighbourhood(v2,g2,F)
                #Step 4.iii
                if set(N_v1).isdisjoint(N_v2):
                    F[v1].append(v2)
                    F[v2].append(v1)
            #Step 4.iv
            checked[v1].remove(t)
        rounds += 1

    #Step 6
    [minjoint,mindegree] = getmindegree(F)
    while mindegree <3:
        for v in F:
            if minjoint in F[v]:
                F[v].remove(minjoint)
        del F[minjoint]
        [minjoint,mindegree] = getmindegree(F)

    return F

#This is the function used in Step 6
def getmindegree(frame):
    minjoint = -1
    mindegree = 3
    for v in frame:
        deg = len(frame[v])
        if deg< mindegree:
            mindegree = deg
            minjoint = v

    return [minjoint,mindegree]
        
#This is the function used in Step 4.ii 
def neighbourhood(initial,distance,frame):
    queue = deque([initial])
    depth = {initial:0}
    visited = [initial]
    explored = []

    while queue:
        current = queue.popleft()
        explored.append(current)
        
        if depth[current]<distance:
            for neighbour in frame[current]:
                if neighbour not in visited:
                    queue.append(neighbour)
                    visited.append(neighbour)
                    depth[neighbour]=depth[current]+1

    return explored

#matrix creates a matrix out of the framework
#python can compute the rank of it
def matrix(frame,num_vert,num_edges):
    R = []
    for r in range(num_edges):
        R.append([])
        for c in range(2*n**2):
            R[r].append(0)
    currentrow = 0
    for v1 in frame:
        for v2 in frame[v1]:
            if v1<v2:
                
                v1y = v1//n
                v1x = v1-v1y*n
                v2y = v2//n
                v2x = v2-v2y*n
                
                R[currentrow][v1*2] = v1x-v2x
                R[currentrow][v1*2+1] = v1y-v2y
                R[currentrow][v2*2] = v2x-v1x
                R[currentrow][v2*2+1] = v2y-v1y
                currentrow+=1
    return R

#'sparsematrix' writes rigidity matrix data to a file
#we used this output in matlab
def sparsematrix(frame):
    places = {}
    count = 1
    for v in frame:
        places[v] = count
        count +=1

    sparse_file = open('sparsematrix.txt','w')
    for v1 in frame:
        for v2 in frame[v1]:
            if v1<v2:
                v1y = v1//n
                v1x = v1 - v1y*n
                v2y = v2//n
                v2x = v2 - v2y*n
                c1 = 2*places[v1]-1
                c2 = 2*places[v2]-1
                sparse_file.write(str(c1)+','+str(c2)+','+str(v1x-v2x)+','
                +str(v1y-v2y)+','+str(v2x-v1x)+','+str(v2y-v1y)+'\n')
    sparse_file.close()

#'saveframe' 
def saveframe(frame):
    frame_file = open('framefile.txt','w')
    
    for v1 in frame:
        for v2 in frame[v1]:
            if v1<v2:
                frame_file.write(str(v1)+','+str(v2)+'\n')
    frame_file.close()

num_found = 0
def dotrials():
    smallest_size = n**2
    smallest_edges = 0
    smallest_frame = {}
    

    for trial in range(num_trials):
        aframe = makeframework()
        num_vert = len(aframe)
        num_edges = 0
        for v in aframe:
            num_edges += len(aframe[v])
        num_edges = num_edges//2
        if num_edges >= 2*num_vert-3:
            amatrix = matrix(aframe,num_vert,num_edges)
            rank = matrix_rank(amatrix)
            if 2*num_vert -3 == rank:
                num_found+=1
                if num_vert<smallest_size:
                    smallest_size = num_vert
                    smallest_edges = num_edges
                    smallest_frame = aframe
                
    return [smallest_frame,smallest_size,smallest_edges]
    
def main():
    if use_python_rank == 'y':
        [frame,size,edges] = dotrials()
        if num_found == 0:
            print('No rigid frameworks found')
        else:
            print('The smallest rigid framework found has', size , 'joints')
            print('and', edges, 'edges')

    else:
        frame = makeframework()
        size = len(frame)
        edges = 0
        for v in frame:
            edges += len(frame[v])
        edges = edges//2
        print('The framework found has',size,'joints')
        print('and',edges,'edges')
    if num_found >0:       
        print('Saving sparse matrix to file...')
        sparsematrix(frame)
        print('Completed')
        print('Saving framework to file...')
        saveframe(frame)
        print('Completed')

main()
\end{lstlisting}

\end{document}